\newtheorem{lemma}{Lemma}[]
\newtheorem{proposition}{Proposition}[]
\newtheorem{theorem}{Theorem}[]
\newtheorem{corollary}{Corollary}[]
\theoremstyle{definition}
\newtheorem{definition}{Definition}[]
\theoremstyle{plain}
\newcommand{\diam}{\operatorname{diam}}
\newcommand{\dis}{\operatorname{dis}}
\newcommand{\dist}{\operatorname{d}}
\newcommand{\dil}{\operatorname{dil}}
\renewcommand{\:}{\colon}
\renewcommand{\ss}{\subset}
\newcommand{\N}{\mathbb{N}}
\newcommand{\R}{\mathbb{R}}
\newcommand{\Z}{\mathbb{Z}}
\title{Lipschitz distance between clouds}
\author{Ivan N. Mikhailov}
\date{}
\begin{document}
\maketitle

\begin{abstract}
In this note we show that the Lipschitz distance between the classes of metric spaces at finite Gromov--Hausdorff distances from the one-point metric space~$\Delta_1$ and the real line~$\R$ with the natural metric, respectively, is positive. 
\end{abstract}



\section{Introduction}

In~\cite{GromovEng} M.\,Gromov introduced moduli spaces of the class of all metric spaces at finite Gromov--Hausdorff distances from a given metric space. It was mentioned that such moduli spaces are always complete and contractible (\cite{GromovEng}[section $3.11_{\frac{1}{2}_+}$]). In~\cite{BogatyTuzhilin} the authors suggested to work with such moduli spaces (they were called \emph{clouds}) in the sense of NBG set theory to avoid arising set-theoretic issues. While the completeness of each cloud was verified in~\cite{BogatyTuzhilin}, the contractibility of each cloud remains an open question for a number of reasons. The main issue here is that a natural homothety-mapping that takes a metric space $(X, d_X)$ into $(X, \lambda d_X)$ for some $\lambda > 0$ and generates a contraction of a cloud of all bounded metric spaces if $\lambda\to 0$, does not behave so well in case of unbounded metric spaces. Firstly, in~\cite{BogatyTuzhilin} it was shown that there exist metric spaces such that $d_{GH}(X, \lambda X) = \infty$ for some $\lambda > 0$. The simplest one is a geometric progression $X = \{3^n\:n\in\N\}$ with a natural metric, for which $d_{GH}(X, 2X) = \infty$. Secondly, even for clouds that are invariant under multiplication on all positive numbers a homothety-mapping may not be continuous. In~\cite{nglitghclitcotrl} it was shown that $d_{GH}(\Z^n, \lambda \Z^n)\ge \frac{1}{2}$ for all $\lambda > 1$, $n\in\N$.

In~\cite{Nesterov} the global geometry of clouds was studied. Namely, each cloud equipped with the Gromov--Hausdorff distance is a metric class, and, hence, the Gromov--Hausdorff distance between different clouds can be defined. It was shown~\cite{Nesterov}[Corollary $3$] that the Gromov--Hausdorff distance between the cloud of bounded metric spaces and the cloud of the real line is infinite. 

In this note we consider the Lipschitz distance between clouds instead of the Gromov--Hausdorff distance. By the idea of Alexey Tuzhilin, since the homothety-mapping is continuous with respect to the Lipschitz distance, this trick could allow to develop a sound geometry of clouds. However, a tricky problem appears immediately: is there a pair of clouds on a finite and non-zero Lipschitz distance from each other? By applying the ideas from~\cite{Nesterov}, we show that the Lipschitz distance between the cloud of bounded metric spaces and the cloud of the real line is positive. However, the question whether this distance is finite remains open.

\subsection*{Acknowledgements}

The author is grateful to his scientific advisor A.\,A.\,Tuzhilin and A.\,O.\,Ivanov for the constant attention to the work. 

Also the author is grateful to all the participants of the seminar <<Theory of extremal networks>>, leaded by A.\,O.\,Ivanov and A.\,A.\,Tuzhilin in Lomonosov Moscow State University for numerous fruitful discussions.

\section{Preliminaries}

\emph{A metric space} is an arbitrary pair $(X,\,d_X)$, where $X$ is an arbitrary set, $d_X\: X\times X\to [0,\,\infty)$ is some metric on it, that is, a nonnegative symmetric, positively definite function that satisfies the triangle inequality. 

For convenience, if it is clear in which metric space we are working, we denote the distance between points $x$ and~$y$ by~$|xy|$. Suppose $X$ is a metric space. By $U_r(a) =\{x\in X\colon |ax|<r\}$, $B_r(a) = \{x\in X\colon |ax|\le r\}$ we denote open and closed balls centered at the point~$a$ of the radius~$r$ in~$X$. For an arbitrary subset $A\subset X$ of a metric space~$X$, let $U_r(A) = \cup_{a\in A} U_r(a)$ be the open $r$-neighborhood of~$A$. For non-empty subsets $A \ss X$, $B \ss X$ we put $\dist(A,\,B)=\inf\bigl\{|ab|:\,a\in A,\,b\in B\bigl\}$.

\subsection{Hausdorff and Gromov--Hausdorff distances}

\begin{definition}
Let $A$ and $B$ be non-empty subsets of a metric space.
\emph{The Hausdorff distance} between $A$ and~$B$ is the quantity $$d_H(A,\,B) = \inf\bigl\{r > 0\colon A\subset U_r(B),\,B\subset U_r(A)\bigr\}.$$
\end{definition}
\begin{definition} Let $X$ and $Y$ be metric spaces. The triple $(X', Y', Z)$, consisting of a metric space $Z$ and its two subsets $X'$ and $Y'$, isometric to $X$ and~$Y$, respectively, is called \emph{a realization of the pair} $(X, Y)$.
\end{definition}

\begin{definition} \emph{The Gromov-Hausdorff distance} $d_{GH} (X, Y)$ between $X$ and~$Y$ is the exact lower bound of the numbers $r\ge 0$ for which there exists a realization $(X', Y', Z)$ of the pair $(X, Y)$ such that $d_H(X',\,Y') \le r$. 
\end{definition}

Now let $X,\,Y$ be non-empty sets.  

\begin{definition} Each $\sigma\subset X\times Y$ is called a \textit{relation} between $X$ and~$Y$.
\end{definition}

By $\mathcal{P}_0(X,\,Y)$ we denote the set of all non-empty relations between $X$ and~$Y$.

We put $$\pi_X\colon X\times Y\rightarrow X,\;\pi_X(x,\,y) = x,$$ $$\pi_Y\colon X\times Y\rightarrow Y,\;\pi_Y(x,\,y) = y.$$ 

\begin{definition} A relation $R\subset X\times Y$ is called a \textit{correspondence}, if their restrictions $\pi_X|_R$ and $\pi_Y|_R$ are surjective.
\end{definition}

Let $\mathcal{R}(X,\,Y)$ be the set of all correspondences between $X$ and~$Y$.

\begin{definition} Let $X,\,Y$ be metric spaces, $\sigma \in \mathcal{P}_0(X,\,Y)$. The \textit{distortion} of $\sigma$ is the quantity $$\dis \sigma = \sup\Bigl\{\bigl||xx'|-|yy'|\bigr|\colon(x,\,y),\,(x',\,y')\in\sigma\Bigr\}.$$
\end{definition}

\begin{proposition}[\cite{BBI}]  \label{proposition: distGHformula}
For arbitrary metric spaces $X$ and~$Y$, the following equality holds $$2d_{GH}(X,\,Y) = \inf\bigl\{\dis\,R\colon R\in\mathcal{R}(X,\,Y)\bigr\}.$$
\end{proposition}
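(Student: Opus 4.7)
The plan is to prove the two inequalities $2 d_{GH}(X, Y) \le \inf_R \dis R$ and $2 d_{GH}(X, Y) \ge \inf_R \dis R$ separately, each by converting one object type (correspondence or realization) into the other with a controlled loss factor of~$2$.

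For the upper bound, I would fix $R \in \mathcal{R}(X, Y)$, set $r = \dis R$, and exhibit a realization of $(X, Y)$ with Hausdorff distance at most $r/2$. The realization lives on $Z = X \sqcup Y$, carrying the original metrics on the two pieces; for $x \in X$ and $y \in Y$ I would set
$$d_Z(x, y) = \inf\bigl\{d_X(x, x') + \tfrac{r}{2} + d_Y(y', y) : (x', y') \in R\bigr\},$$
the length of the cheapest one-jump path that crosses the correspondence. The surjectivity of both projections of $R$ then produces, for each $x \in X$, a cross-partner $y \in Y$ with $(x, y) \in R$, and taking $x' = x$, $y' = y$ in the infimum gives $d_Z(x, y) \le r/2$; symmetrically for each $y \in Y$. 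Hence $d_H(X, Y) \le r/2$ in $Z$ and $d_{GH}(X, Y) \le r/2$.

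For the lower bound, I would pick any realization $(X', Y', Z)$ with $d_H(X', Y') < r$ and form the relation $R' = \{(x, y) \in X' \times Y' : d_Z(x, y) < r\}$. The neighborhood condition $d_H(X', Y') < r$ is exactly the assertion that both projections of $R'$ are surjective, so $R'$ is a correspondence, which I transfer through the defining isometries to $R \in \mathcal{R}(X, Y)$. For any two pairs $(x, y), (x', y') \in R$, the triangle inequality in $Z$ yields
$$\bigl|d_Z(x, x') - d_Z(y, y')\bigr| \le d_Z(x, y) + d_Z(x', y') < 2r,$$
so $\dis R \le 2r$. Letting $r$ approach $d_{GH}(X, Y)$ from above delivers $\inf_R \dis R \le 2 d_{GH}(X, Y)$.

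The main technical obstacle is the verification that the formula above really defines a metric on $Z$, specifically the triangle inequality for mixed triples. The only delicate case is $(x_1, y, x_2)$ with two points in $X$ and one in $Y$: here the distortion bound $\dis R \le r$ is used precisely to replace $d_Y(y', y'')$ by $d_X(x', x'') - r$ in the sum $d_Z(x_1, y) + d_Z(y, x_2)$, and this $-r$ cancels exactly against the two $r/2$ contributions coming from the cross-edges, leaving $d_X(x_1, x') + d_X(x', x'') + d_X(x'', x_2) \ge d_X(x_1, x_2)$. The symmetric case $(y_1, x, y_2)$ is identical, while triples of the form $(x_1, x_2, y)$ or $(x, y_1, y_2)$ reduce directly to the definition of $d_Z$ as an infimum combined with the triangle inequality on a single factor.
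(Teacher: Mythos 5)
The paper states this proposition only as a cited result from~\cite{BBI} and gives no proof of its own; your argument is precisely the standard one from that reference --- gluing $X\sqcup Y$ with cross-distances $\inf\{d_X(x,x')+\tfrac{r}{2}+d_Y(y',y)\}$ over $(x',y')\in R$ for one inequality, and extracting a correspondence $\{(x,y)\colon d_Z(x,y)<r\}$ from an $r$-close realization for the other --- and all the estimates you give (the cancellation of $-r$ against the two $\tfrac{r}{2}$ terms in the mixed triangle inequality, and the bound $|d_Z(x,x')-d_Z(y,y')|\le d_Z(x,y)+d_Z(x',y')$) check out. The only point worth a one-line remark is the degenerate case $\dis R=0$, where your formula makes $d_Z$ only a pseudometric on $X\sqcup Y$; this is handled by using $\tfrac{r+\varepsilon}{2}$ in place of $\tfrac{r}{2}$ and letting $\varepsilon\to 0$, or by passing to the metric quotient.
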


\subsection{Lipschitz distance}

\begin{definition} \label{def: bilip1}
A homemorphism~$f$ of metric spaces is called \emph{bilipschitz} iff both maps~$f$ and~$f^{-1}$ are Lipschitz. 
\end{definition}

\begin{definition}\label{def: bilip2} \emph{The Lipschitz distance} between metric spaces $X$ and~$Y$ is the value  
$$d_L(X,\,Y) = \inf_{f\colon X\to Y}\,\ln\bigl(\max\bigl\{\dil(f),\,\dil(f^{-1})\bigr\}\bigr),$$
where infinum is taken over all bilipschitz homeomorphisms~$f\colon X\to Y$.
\end{definition}

\subsection{Clouds}

By~$\mathcal{VGH}$ we denote the class of all non-empty metric spaces, equipped with the Gromov--Hausdorff distance.  

Note that~$\mathcal{VGH}$ is a proper class in the sense of the NBG set theory. In this theory all objects are \emph{classes} of one of the two following types: \emph{sets}, or \emph{proper classes}. A class is called a set if it belongs to some other class, and a proper class otherwise. It is important for us that for all classes the following natural constructions are defined: Cartesian product, maps between classes, metrics, pseudometrics, etc.  

\begin{theorem}[\cite{BBI}]
The Gromov--Hausdorff distance is a generalised pseudometric on~$\mathcal{VGH}$, vanishing on pairs of isometric metric spaces. Namely, the Gromov--Hausdorff distance is symmetric, satisfies a triangle inequality, though can vanish or be infinite between some pairs of non-isometric metric spaces.
\end{theorem}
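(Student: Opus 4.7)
The plan is to reduce every assertion to a statement about the distortion of correspondences via Proposition~\ref{proposition: distGHformula}; this bypasses the need to construct ambient metric spaces containing both $X$ and $Y$ and lets us work purely inside~$X\times Y$.

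For symmetry, the transposition $R\mapsto R^{\top}:=\bigl\{(y,x):(x,y)\in R\bigr\}$ is a distortion-preserving bijection $\mathcal{R}(X,Y)\to\mathcal{R}(Y,X)$, so passing to infima yields $d_{GH}(X,Y)=d_{GH}(Y,X)$. Vanishing on isometric pairs is equally short: if $f\colon X\to Y$ is an isometry, its graph $\bigl\{(x,f(x)):x\in X\bigr\}$ is a correspondence of distortion~$0$, hence $d_{GH}(X,Y)=0$.

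The main step is the triangle inequality $d_{GH}(X,Z)\le d_{GH}(X,Y)+d_{GH}(Y,Z)$, which I would prove by composing correspondences. Given $R_1\in\mathcal{R}(X,Y)$ and $R_2\in\mathcal{R}(Y,Z)$, define
\[
R_2\c R_1 := \bigl\{(x,z)\in X\times Z : \exists\, y\in Y \text{ with } (x,y)\in R_1,\ (y,z)\in R_2\bigr\}.
\]
Surjectivity of the four projections from $R_1$ and $R_2$ implies $R_2\c R_1\in\mathcal{R}(X,Z)$. For any $(x,z),(x',z')\in R_2\c R_1$ witnessed by $y,y'\in Y$, two applications of the ordinary triangle inequality on~$\R$ give
\[
\bigl||xx'|-|zz'|\bigr|\le\bigl||xx'|-|yy'|\bigr|+\bigl||yy'|-|zz'|\bigr|\le \dis R_1+\dis R_2,
\]
so $\dis(R_2\c R_1)\le \dis R_1+\dis R_2$; taking infima over $R_1,R_2$ finishes the argument. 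This composition estimate is the only computational content in the proof.

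Finally, to justify the word \emph{generalised} I would record the standard examples. If $X\ss\overline{X}$ is dense in its completion (say $X=[0,1]\cap\mathbb{Q}$, $\overline{X}=[0,1]$), the graph of the inclusion has distortion~$0$, giving $d_{GH}(X,\overline{X})=0$ on a non-isometric pair. Conversely, any correspondence between a one-point space and~$\R$ must pair the single point with every real number, so its distortion is infinite; hence $d_{GH}(\{*\},\R)=+\infty$. The only point requiring mild care is the verification that $R_2\c R_1$ is indeed a correspondence; beyond that unpacking of surjectivity, the proof is routine bookkeeping.
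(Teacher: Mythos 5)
The paper does not prove this statement at all; it is quoted from~\cite{BBI} as background, so there is no internal proof to compare against. Your argument is the standard one from that reference: reduce everything to distortions of correspondences via Proposition~\ref{proposition: distGHformula}, prove symmetry by transposition, handle isometric pairs via the graph of an isometry, and obtain the triangle inequality from the composition estimate $\dis(R_2\c R_1)\le\dis R_1+\dis R_2$. All of that is correct, including the verification that $R_2\c R_1$ is a correspondence.

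One small but genuine slip occurs in your justification of the word \emph{generalised}: the graph of the inclusion $X\hookrightarrow\bX$ of a dense non-closed subset is a relation of distortion~$0$, but it is \emph{not} a correspondence, since the projection onto $\bX$ is not surjective; Proposition~\ref{proposition: distGHformula} takes the infimum only over correspondences, so you cannot conclude $d_{GH}(X,\bX)=0$ from it directly. The fix is immediate: either use the realization definition with $Z=\bX$, $X'=X$, $Y'=\bX$ and note $d_H(X,\bX)=0$, or replace the graph by the correspondences $R_\e=\bigl\{(x,\bar x)\in X\times\bX : |x\bar x|<\e\bigr\}$, which are surjective onto both factors by density and have distortion at most $2\e$. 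The infinite-distance example $d_{GH}(\Delta_1,\R)=+\infty$ is fine as stated.
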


A class~$\mathcal{GH}_0$ is obtained from~$\mathcal{VGH}$ by factorization over zero distances, i.e., over an equivalence relation $X\sim_0 Y$, iff $d_{GH}(X,\,Y) = 0$.

\begin{definition}
Consider an equivalence relation~$\sim_1$ on~$\mathcal{GH}_0$: $X\sim_1 Y$, iff~$d_{GH}(X,\,Y) < \infty$. We call the corresponding equivalence classes \emph{clouds}.
\end{definition}

For an arbitrary metric space~$X$, we denote a cloud containing~$X$ by~$[X]$. Let~$\Delta_1$ be a metric space, consisting of a single point. Hence, $[\Delta_1]$ is the cloud of all bounded metric spaces. 

Suppose that for some metric spaces $A$ and~$A'$, the equality holds~$d_{GH}(A, A') = 0$. Then, for arbitrary metric space~$B$, we also have~$d_{GH}(A, B) = d_{GH}(A', B)$. From this simple observation, it follows that all the results about~$d_{GH}(A, B)$ also hold if we exchange $A$ by~$A'$ such that~$d_{GH}(A, A') = 0$. Thus, instead of interpreting <<$A\in [X]$>> directly by definition so that $A$ is an equivalence class of all metric spaces on zero Gromov--Hausdorff distances from each other, we will mean that~$A$ is a certain member of this equivalence class. For example,~$X\in[\Delta_1]$ can be read as <<$X$ is a bounded metric space>> throughout the paper.

\begin{theorem}[\cite{BBI}] \label{thm: boundedcloud}
Let $X$ and~$Y$ be arbitrary bounded metric spaces. Then
\begin{itemize}
\item The inequalities hold 
$$\frac{1}{2}\bigl|\diam X - \diam Y\bigr|\le d_{GH}(X, Y)\le \max\bigl\{d_{GH}(X, \Delta_1), d_{GH}(Y, \Delta_1)\bigr\} = \frac{1}{2}\max\bigl\{\diam X, \diam Y\bigr\}.$$
\item A map $\Phi\:[\Delta_1]\times \R_{\ge 0}\to [\Delta_1]$, $\Phi(X, \lambda) = \lambda X$ is continuous and generates a contraction of the cloud~$[\Delta_1]$ if~$\lambda \to 0$.
\item A curve~$\lambda X$, $\lambda\in[0, +\infty)$ is a geodesic with respect to the Gromov--Hausdorff distance in the cloud~$[\Delta_1]$.
\end{itemize}   
\end{theorem}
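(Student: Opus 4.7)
The plan is to prove each of the three claims by reducing to the distortion formulation of the Gromov--Hausdorff distance (Proposition~\ref{proposition: distGHformula}) and exhibiting explicit correspondences that achieve the required upper bounds.

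For the diameter inequalities, I would establish the lower bound $\tfrac{1}{2}|\diam X - \diam Y|\le d_{GH}(X,Y)$ by fixing an arbitrary $R\in\cR(X,Y)$, selecting $(x,y),(x',y')\in R$ with $|xx'|$ arbitrarily close to $\diam X$, and invoking $\bigl||xx'|-|yy'|\bigr|\le \dis R$ to conclude $\diam Y\ge \diam X-\dis R$; symmetrizing and passing to the infimum over $R$ gives the desired inequality. For the upper bound I would use the trivial correspondence $R=X\times Y$, whose distortion is bounded by $\max\{\diam X,\diam Y\}$ since every pair of distances in $X$ or $Y$ lies in $[0,\max\{\diam X,\diam Y\}]$. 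Specializing to $Y=\Delta_1$ gives $d_{GH}(X,\Delta_1)\le\tfrac{1}{2}\diam X$, and the matching lower bound yields the stated equality.

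For continuity of $\Phi$ and the contraction, I would first record two auxiliary estimates: the scaling identity $d_{GH}(\lambda X,\lambda Y)=\lambda\, d_{GH}(X,Y)$ (immediate, since multiplying both ambient metrics by $\lambda$ multiplies every distortion by $\lambda$), and the bound $d_{GH}(\lambda X,\mu X)\le\tfrac{1}{2}|\lambda-\mu|\diam X$ obtained from the diagonal correspondence $\{(x,x):x\in X\}$, whose distortion equals $|\lambda-\mu|\diam X$. Combining them via triangle inequality,
\[
d_{GH}(\lambda X,\mu Y)\le \lambda\, d_{GH}(X,Y)+\tfrac{1}{2}|\lambda-\mu|\diam Y,
\]
and both summands vanish as $(Y,\mu)\to(X_0,\lambda_0)$ because $\diam Y$ stays bounded by the first claim. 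At $\lambda=0$ the space $\lambda X$ collapses to (a copy of) $\Delta_1$, so $\Phi$ supplies the required contraction of $[\Delta_1]$ onto $\Delta_1$ as $\lambda\to 0$. For the geodesic statement, pairing the upper bound $d_{GH}(\mu X,\lambda X)\le\tfrac{1}{2}|\lambda-\mu|\diam X$ with the matching lower bound from the diameter inequality yields equality $d_{GH}(\mu X,\lambda X)=\tfrac{1}{2}|\lambda-\mu|\diam X$; its linearity in $|\lambda-\mu|$ forces the arc-length of $\lambda\mapsto\lambda X$ between any two parameters to equal the distance between the endpoints, so the curve is a geodesic.

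Since the theorem is a classical result cited from BBI, no serious obstacle is expected; the only delicacy is arranging the continuity argument so that both $X$ and $\lambda$ move jointly, which the triangle inequality combining the scaling identity with the diagonal estimate handles in one step.
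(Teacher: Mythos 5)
Your proposal is correct: the lower bound via the distortion of an arbitrary correspondence, the upper bound via $\dis(X\times Y)=\max\{\diam X,\diam Y\}$, the scaling identity plus the diagonal-correspondence estimate $d_{GH}(\lambda X,\mu X)\le\tfrac12|\lambda-\mu|\diam X$ for continuity, and the matching upper/lower bounds for the geodesic claim are exactly the standard arguments. The paper itself gives no proof of this theorem (it is quoted from the cited textbook), so there is nothing to contrast with; your write-up fills in precisely the expected classical proof.
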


\begin{theorem}[\cite{Nesterov}] 
Every cloud is a proper class in the sense of NBG set theory. 
\end{theorem}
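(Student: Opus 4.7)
The plan is to exhibit a proper class of pairwise non-$\sim_0$-equivalent representatives inside an arbitrary cloud~$[X]$. Since every subclass of a set is a set in NBG, producing such a proper subclass forces~$[X]$ itself to be proper.

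Fix a representative~$X$ and a basepoint~$x_0\in X$. For each cardinal~$\kappa$, I attach~$\kappa$ isolated ``flagpole'' points to~$X$, forming
$$
Y_\kappa = X \sqcup \{p_\alpha : \alpha < \kappa\},
$$
with the metric that restricts to~$d_X$ on~$X$, sets~$|p_\alpha p_\beta|=1$ for $\alpha\ne\beta$, and sets~$|p_\alpha x| = d_X(x,x_0) + \tfrac{1}{2}$ for all~$x\in X$. A routine triangle-inequality check confirms this is a metric. To place~$Y_\kappa$ in the cloud, I would use the correspondence
$$
R = \{(x,x):x\in X\}\cup\{(x_0,p_\alpha):\alpha<\kappa\}.
$$
The largest contribution to~$\dis R$ comes from pairs $(x_0,p_\alpha),(x_0,p_\beta)$ and equals~$1$; Proposition~\ref{proposition: distGHformula} then gives $d_{GH}(X,Y_\kappa)\le \tfrac{1}{2}$, so $Y_\kappa\in[X]$.

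The crux is to show $d_{GH}(Y_\kappa, Y_\lambda) \ge \tfrac{1}{2}$ whenever $|X| < \kappa < \lambda$ and both cardinals are infinite. For any correspondence~$R'\in\mathcal{R}(Y_\kappa,Y_\lambda)$, each of the~$\lambda$ points~$p^\lambda_\alpha$ of~$Y_\lambda$ must be~$R'$-partnered with at least one element of~$Y_\kappa$; but $|Y_\kappa| = |X| + \kappa = \kappa < \lambda$. Pigeonhole then yields some~$y\in Y_\kappa$ and distinct~$\alpha,\beta$ with $(y,p^\lambda_\alpha),(y,p^\lambda_\beta)\in R'$, producing the contribution $\bigl||yy| - |p^\lambda_\alpha p^\lambda_\beta|\bigr| = 1$ to~$\dis R'$. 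Thus $\kappa\mapsto [Y_\kappa]$ is injective on the class of infinite cardinals exceeding~$|X|$, which is itself a proper class; hence $[X]$ is a proper class.

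The metric-geometric computations are routine; the only subtlety is the NBG bookkeeping---one must verify that the class of cardinals larger than~$|X|$ is genuinely proper (a standard Burali-Forti-style fact) and that $\kappa \mapsto Y_\kappa$ is a well-defined class function, which can be arranged by taking the underlying set of~$Y_\kappa$ to be $X \sqcup \kappa$ with~$\kappa$ understood as the von Neumann cardinal.
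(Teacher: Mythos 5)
Your construction is correct and complete: the spaces $Y_\kappa$ are genuine metric spaces, the correspondence $R$ shows $d_{GH}(X,Y_\kappa)\le\tfrac12$ so that $Y_\kappa\in[X]$, and the pigeonhole argument forces $\dis R'\ge 1$ for every correspondence, whence $d_{GH}(Y_\kappa,Y_\lambda)\ge\tfrac12>0$ and the $Y_\kappa$ represent pairwise distinct points of the cloud indexed by the proper class of infinite cardinals above $|X|$. Note that the paper itself states this theorem only as a citation of~\cite{Nesterov} and gives no proof, so there is nothing in-text to compare against; your argument is the standard one for this result (attaching bounded discrete clusters of arbitrarily large cardinality), and the only implicit ingredient worth acknowledging is the use of choice in the pigeonhole step, which is unproblematic in NBG with global choice.
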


Since in NBG theory each proper class can be put into one-to-one correspondence with the class of all ordinals \cite{properclassesbijection}[p. 53], we obtain

\begin{corollary}\label{cor: cloud-bijection}
Every two clouds are bijective to each other.
\end{corollary}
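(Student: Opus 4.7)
The plan is to chain two results that are already in hand: the theorem immediately preceding the corollary, which asserts that every cloud is a proper class, and the cited fact from \cite{properclassesbijection} that in NBG set theory every proper class is in one-to-one correspondence with the class $\mathrm{Ord}$ of all ordinals. Granted these two inputs, the corollary is a one-line composition argument.

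Concretely, I would begin by fixing two arbitrary clouds $[X]$ and $[Y]$. Applying the preceding theorem to each, I obtain that both $[X]$ and $[Y]$ are proper classes (not sets). Then, invoking the cited result, I fix bijections $\varphi_X\colon [X]\to \mathrm{Ord}$ and $\varphi_Y\colon [Y]\to \mathrm{Ord}$. The composition $\varphi_Y^{-1}\circ \varphi_X\colon [X]\to [Y]$ is a bijection, which is exactly the claim.

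Since the proof is essentially a two-step invocation, there is no genuine technical obstacle; the only mild subtlety worth flagging is a foundational one. In NBG one must be a little careful when forming the inverse of a bijection between proper classes and composing class functions, but these operations are among the <<natural constructions>> explicitly listed as available for classes earlier in the Preliminaries, so no additional machinery is needed. The corollary therefore reduces to the single sentence already present in the excerpt just before its statement, and the proof proposal is to spell out that sentence as the composition argument above.
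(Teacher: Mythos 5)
Your proposal is correct and follows exactly the paper's route: the corollary is deduced from the preceding theorem (every cloud is a proper class) together with the cited NBG fact that every proper class is bijective with the class of all ordinals, whence two clouds are bijective via composition through $\mathrm{Ord}$. The paper leaves this composition implicit in the single sentence before the corollary, and your write-up simply makes it explicit.
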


From Corollary~\ref{cor: cloud-bijection}, it follows that, if we equip each cloud with the Gromov--Hausdorff distance, we can define the Lipschitz distance between them by mimicking Definitions~\ref{def: bilip1}, and~\ref{def: bilip2}. To avoid unnecessary technicalities, in these definitions by of $f$ and $f^{-1}$ we mean the usual continuity with respect to the metric defined Gromov--Hausdorff distance (a map $f\: A\to  B$ is continuous in $x$ if, for arbitrary $\varepsilon > 0$, there exists $\delta > 0$ such that $f\bigl(U_\delta(x)\bigr)\subseteq U_\varepsilon(f(x))$).

Finally, we will need the following construction.

Let $X$ be arbitrary bounded path-connected metric space of diameter~$1$. Fix $0 < \delta < \frac{1}{2}$. We put
\begin{align*}
&\Z_t = \cup_{n\in\Z}[n-t, n+t]\subset \R, \;t\in \Bigl[\frac{1}{2}-\delta, \frac{1}{2}\Bigr],\\
& \R_d = \R\times_{\ell^1}(dX),\;d\in [0, \delta],
\end{align*}
where $X\times_{\ell^1}Y$ is the Cartesian product $X\times Y$ equipped with the $\ell^1$-metric:
$$d_{X\times_{\ell^1}Y}\bigl((x, y), (x', y')\bigr) = d_X(x, x') + d_Y(y, y').$$

\begin{theorem}[\cite{geodesicthroughR}]\label{theorem: geodesicthroughR}
By gluing $($and reparametrizing$)$ $\Z_t$, $t\in [\frac{1}{2}-\delta, \frac{1}{2}]$ and $\R_d$, $d\in [0, \delta]$, we obtain a shortest curve in the Gromov--Hausdorff class, for which $\R$ is an interior point.  
\end{theorem}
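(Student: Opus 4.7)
The plan has three parts: bound the length of the described glued curve from above, match that bound with a global lower bound on the Gromov--Hausdorff distance between its endpoints, and thereby conclude it is a shortest curve along which $\R$ lies strictly between the endpoints.

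First, I would obtain upper bounds on $d_{GH}$ between nearby points of the curve, which both establish continuity of each leg and bound the total length. For $t\le t'\le 1/2$, the nearest-point retraction $\Z_{t'}\to\Z_t$ together with the inclusion $\Z_t\hookrightarrow\Z_{t'}$ yields a correspondence of distortion at most $2(t'-t)$, so $d_{GH}(\Z_t,\Z_{t'})\le t'-t$. For $d\le d'\le\delta$, the ``same-point'' correspondence $(s,x)\sim(s,x)$ between $\R\times_{\ell^1}(dX)$ and $\R\times_{\ell^1}(d'X)$ has distortion $(d'-d)\diam X=d'-d$, so $d_{GH}(\R_d,\R_{d'})\le (d'-d)/2$. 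Summing telescoping bounds along each leg and reparametrizing gives an explicit upper bound $L$ on the length of the glued curve.

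Second, I would prove matching lower bounds on each individual leg so that the natural parameters are, up to affine scaling, arc-length parameters. For $\Z_t$, the rigidity comes from its connected components being intervals of length $2t$ in a fixed unit-period pattern, which any correspondence to $\Z_{t'}$ must approximately preserve. For $\R_d$ the rigidity comes from the transverse diameter $d\diam X=d$ of each slice $\{s\}\times(dX)$, which cannot be collapsed without distortion. The lower bounds are extracted by choosing witness quadruples in a correspondence and applying Proposition~\ref{proposition: distGHformula}.

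The principal obstacle, and the technical heart of the proof, is the global inequality $d_{GH}(\Z_{1/2-\delta},\R_\delta)\ge L$, needed to conclude that the two legs concatenate into a single shortest curve rather than two shorter geodesics meeting at $\R$. I would approach this by an obstruction argument in the spirit of~\cite{Nesterov}: given an arbitrary $R\in\mathcal{R}(\Z_{1/2-\delta},\R_\delta)$, select witnesses that simultaneously feel the gap structure of $\Z_{1/2-\delta}$ (contributing a distortion of order $\delta$) and the transverse thickness of $\R_\delta$ (contributing another order $\delta$). The $\ell^1$ splitting $\R_\delta=\R\times_{\ell^1}(\delta X)$ should make these contributions cumulate additively, mirroring the additive split of $L$ at the midpoint $\R$. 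The crucial step is to show that no single correspondence can avoid both costs simultaneously, giving $\dis R\ge 2L$ for every $R$; it is precisely this additivity that forces $\R$ to be an interior, rather than corner, point of the shortest curve.
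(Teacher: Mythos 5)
A preliminary remark: the paper itself contains no proof of this statement --- Theorem~\ref{theorem: geodesicthroughR} is imported verbatim from the reference \cite{geodesicthroughR}, which is listed as ``to appear''. There is therefore no in-paper argument to compare your attempt against, and I can only assess the proposal on its own terms.

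Your logical skeleton is sound, and the upper bounds are correct: the retraction correspondence gives $d_{GH}(\Z_t,\Z_{t'})\le t'-t$, the identity correspondence gives $d_{GH}(\R_d,\R_{d'})\le\frac{1}{2}(d'-d)$, so the glued curve is rectifiable of length at most $L=\delta+\frac{\delta}{2}$; and if one also proves $d_{GH}(\Z_{1/2-\delta},\R_\delta)\ge L$, the chain $L\le d_{GH}(\Z_{1/2-\delta},\R_\delta)\le\mathrm{length}\le L$ closes, forcing each leg to have length equal to its upper bound, hence positive, so $\R$ is a genuine interior point. (Note that your second step, the leg-wise lower bounds, then becomes redundant --- it is implied by the global bound together with the telescoping upper bounds.) The genuine gap is that essentially the entire content of the theorem sits in the single inequality $\dis R\ge 2L=3\delta$ for every $R\in\mathcal{R}\bigl(\Z_{1/2-\delta},\,\R\times_{\ell^1}(\delta X)\bigr)$, and for this you offer only the expectation that the ``gap cost'' of $\Z_{1/2-\delta}$ and the ``transverse thickness cost'' of $\delta X$ cumulate additively inside any one correspondence. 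That additivity is precisely what must be proved and is not automatic: a concatenation of two geodesic segments through a common point is generically not a shortest curve, and lower bounds on Gromov--Hausdorff distances of this sort are exactly the delicate part of the subject (compare the effort required in \cite{nglitghclitcotrl} merely to establish $d_{GH}(\Z^n,\lambda\Z^n)\ge\frac{1}{2}$). You would need to produce, for an arbitrary correspondence $R$, explicit point configurations witnessing $\dis R\ge 3\delta$, and your sketch does not indicate how witnesses for the two separate costs can be chosen compatibly within one $R$; the sentence ``no single correspondence can avoid both costs simultaneously'' restates the theorem rather than arguing for it. Until that step is supplied, the proposal is an outline, not a proof.
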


\section{Main theorem}

\begin{lemma}\label{LipDefinition}
Let $X$ and $Y$ be arbitrary metric classes. Then  $d_L(X,\,Y) = 0$ iff for arbitrary $1 > \varepsilon > 0$ there exists a bijeciton  $f\colon X\to Y$ such that for all $x,\,x'\in X$ the following inequalities hold 
\begin{align}\label{LipIneq}
(1-\varepsilon)|xx'|&\le |f(x)f(x')|\le (1+\varepsilon)|xx'|,\\ 
(1-\varepsilon)|f(x)f(x')|&\le |xx'|\le (1+\varepsilon)|f(x)f(x')|.
\end{align}
\end{lemma}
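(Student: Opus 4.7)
The plan is to unpack Definition~\ref{def: bilip2} and recognise that the four inequalities in the statement are a symmetric reformulation of the single condition $\max\{\dil(f),\,\dil(f^{-1})\} \le 1 + \varepsilon$. Writing $L(f) = \max\{\dil(f),\,\dil(f^{-1})\}$, the definition says $d_L(X, Y) = 0$ iff for every $\eta > 0$ there is a bilipschitz bijection $f\colon X \to Y$ with $L(f) \le e^\eta$.

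For the $(\Rightarrow)$ direction, given $0 < \varepsilon < 1$, I apply the infimum property with $\eta = \ln(1+\varepsilon)$ to obtain a bijection $f$ satisfying $\dil(f) \le 1+\varepsilon$ and $\dil(f^{-1}) \le 1+\varepsilon$. The two upper bounds $|f(x)f(x')| \le (1+\varepsilon)|xx'|$ and $|xx'| \le (1+\varepsilon)|f(x)f(x')|$ are just the defining inequalities of these two dilations. The two lower bounds follow by inverting them and invoking the elementary estimate $(1-\varepsilon)(1+\varepsilon) = 1-\varepsilon^2 \le 1$, hence $(1+\varepsilon)^{-1} \ge 1-\varepsilon$; for example, $|f(x)f(x')| \ge (1+\varepsilon)^{-1}|xx'| \ge (1-\varepsilon)|xx'|$, and symmetrically $|xx'| \ge (1-\varepsilon)|f(x)f(x')|$.

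Conversely, given an arbitrary $\eta > 0$ I pick $0 < \varepsilon < 1$ with $\ln(1+\varepsilon) \le \eta$ and let $f$ be the bijection supplied by the hypothesis. The two upper-bound inequalities immediately yield $\dil(f) \le 1+\varepsilon$ and $\dil(f^{-1}) \le 1+\varepsilon$; finite dilation makes both $f$ and $f^{-1}$ Lipschitz and hence continuous, so $f$ qualifies as a bilipschitz homeomorphism in the sense of the preamble to this section. Therefore $\ln L(f) \le \ln(1+\varepsilon) \le \eta$, and since $\eta > 0$ was arbitrary we conclude $d_L(X, Y) = 0$.

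There is no genuine obstacle: the lemma is purely the equivalence of two reformulations of the assertion "$d_L = 0$". The only point worth keeping in mind is that the four inequalities in the statement are slightly redundant — the two lower-bound inequalities are strictly implied by the inverses of the opposite upper-bound inequalities via the $1-\varepsilon^2$ slack, and only the upper bounds are needed to recover the dilation estimate in the $(\Leftarrow)$ direction.
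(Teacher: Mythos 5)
Your proof is correct and follows essentially the same route as the paper's: both directions amount to translating between the $e^{\delta}$ form of the dilation bound in Definition~\ref{def: bilip2} and the $(1\pm\varepsilon)$ form in the lemma, with only a cosmetic difference in how the conversion is arranged (you use $(1+\varepsilon)^{-1}\ge 1-\varepsilon$ to absorb the lower bounds, while the paper picks $\delta\le\min\{\ln(1+\varepsilon),-\ln(1-\varepsilon)\}$). Your remark that finite dilation already makes $f$ and $f^{-1}$ continuous, so the hypothesized bijection is automatically a bilipschitz homeomorphism, is a small point the paper leaves implicit, and your observation about the redundancy of the two lower-bound inequalities is accurate.
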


\begin{proof}
Suppose $d_L(X, Y) = 0$. Then for arbitrary $\delta > 0$ there exists a bilipschitz homeomorphism $f\:X\to Y$ such that $1\le \max\bigl\{\dil f, \dil f^{-1}\bigr\}\le e^\delta$. Hence, by Definition~\ref{def: bilip1}, for all $x, x'\in X$, the inequalities hold
\begin{align}\label{auxillary-inequialities}
    |xx'|\le e^\delta|f(x)f(x')|,\; |f(x)f(x')|\le e^\delta |xx'|.
\end{align}
Fix some $1 > \varepsilon > 0$. Then by initially choosing $\delta>0$ such that $\displaystyle 1-\varepsilon\le e^{-\delta},\, e^\delta \le 1+\varepsilon$ which is equivalent to $\delta \le\min\bigl\{\ln(1+\varepsilon),\,-\ln(1-\varepsilon)\bigr\}$, the desired inequalities follow from Inequialities~\ref{auxillary-inequialities}.

Conversely, to ensure that $d_L(X, Y) = 0$ follows from the stated inequalities, we have to choose $\varepsilon > 0$ such that $\displaystyle e^{-\delta} \le 1-\varepsilon,\, 1+\varepsilon\le e^\delta$ which is equivalent to $\varepsilon \le \min\bigl\{1-e^{-\delta},\,e^\delta-1\bigr\}$. 

\end{proof}

\begin{theorem}\label{theorem: nonzeroLip}
The inequality holds $d_L\bigl([\mathbb{R}],\,[\Delta_1]\bigr) > 0$. 
\end{theorem}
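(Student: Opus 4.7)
The plan is to argue by contradiction through Lemma~\ref{LipDefinition}. Suppose $d_L\bigl([\R], [\Delta_1]\bigr) = 0$; then for any $\varepsilon \in (0, \sqrt{2}-1)$ there exists a bijection $f\colon [\R] \to [\Delta_1]$ satisfying~\eqref{LipIneq}. Set $Z := f^{-1}(\Delta_1) \in [\R]$ and $r := d_{GH}(\R, Z)$; the latter is finite because $Z$ and $\R$ belong to the same cloud.

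The first step is to pull the ``max inequality'' of Theorem~\ref{thm: boundedcloud} back through~$f$. For $f(X), f(Y) \in [\Delta_1]$ one has
$$d_{GH}\bigl(f(X), f(Y)\bigr) \le \max\bigl\{d_{GH}(f(X), \Delta_1),\; d_{GH}(f(Y), \Delta_1)\bigr\};$$
combining this with~\eqref{LipIneq} and the equality $f(Z) = \Delta_1$ yields, for all $X, Y \in [\R]$,
$$d_{GH}(X, Y) \;\le\; (1+\varepsilon)^2\,\max\bigl\{d_{GH}(X, Z),\; d_{GH}(Y, Z)\bigr\}. \qquad(\star)$$

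The second step is to exhibit $X, Y \in [\R]$ violating~$(\star)$. Fix any $\delta \in (0, 1/2)$ and any bounded path-connected space of diameter~$1$; Theorem~\ref{theorem: geodesicthroughR} supplies a shortest curve of length $2\delta$ in the Gromov--Hausdorff class having $\R$ as an interior point, with endpoints at distance $\delta$ from $\R$ on either side. Since $L\R$ is isometric to $\R$ for every $L > 0$, the cloud $[\R]$ is invariant under the dilation $A \mapsto LA$, which multiplies all Gromov--Hausdorff distances by $L$ and hence carries shortest curves in $[\R]$ to shortest curves in $[\R]$. Applying this dilation with $L = T/\delta$ to the curve above provides, for any $T > 0$, spaces $X_T, Y_T \in [\R]$ with
$$d_{GH}(X_T, \R) = d_{GH}(Y_T, \R) = T \quad\text{and}\quad d_{GH}(X_T, Y_T) = 2T.$$

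Finally, the triangle inequality gives $\max\{d_{GH}(X_T, Z),\, d_{GH}(Y_T, Z)\} \le T + r$, so $(\star)$ would force $2T \le (1+\varepsilon)^2\,(T + r)$ for every $T > 0$. Sending $T \to \infty$ yields $2 \le (1+\varepsilon)^2$, i.e.\ $\varepsilon \ge \sqrt{2} - 1$, contradicting the initial choice of $\varepsilon$. The most delicate step will be the scaling argument itself, i.e.\ verifying that dilating every space on the shortest curve from Theorem~\ref{theorem: geodesicthroughR} by a factor~$L$ again produces a shortest curve of length $2L\delta$ lying inside $[\R]$; this reduces to the isometry $L\R \cong \R$ together with the linear scaling of the Gromov--Hausdorff distance under metric dilations.
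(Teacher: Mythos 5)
Your proof is correct, and it reaches the conclusion by a genuinely different route from the paper. The paper works with a bijection $g\:[\Delta_1]\to[\R]$ and splits into two cases according to whether $g(\Delta_1)=\R$ or not: when it is, it applies the max-inequality of Theorem~\ref{thm: boundedcloud} to the pulled-back endpoints of the geodesic from Theorem~\ref{theorem: geodesicthroughR}; when it is not, it runs a separate scaling argument \emph{inside} $[\Delta_1]$, using the geodesics $\lambda\mapsto\lambda X$ to manufacture a forbidden configuration. You eliminate the case distinction entirely: by centering the pulled-back max-inequality at $Z=f^{-1}(\Delta_1)$ and scaling the geodesic configuration inside $[\R]$ by $L=T/\delta$, the discrepancy $r=d_{GH}(\R,Z)$ becomes a lower-order term that disappears as $T\to\infty$. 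The ingredients are the same (Lemma~\ref{LipDefinition}, the max-inequality for bounded spaces, Theorem~\ref{theorem: geodesicthroughR}, and the homogeneity $d_{GH}(LA,LB)=L\,d_{GH}(A,B)$ together with $L\R\cong\R$, which is exactly the "delicate step" you flag and is indeed unproblematic), but your organization is cleaner and does not use the geodesic structure of $[\Delta_1]$ at all. It also yields the sharper threshold $\varepsilon\ge\sqrt{2}-1$ in place of the paper's $\varepsilon\ge\frac{1}{3}$, hence a slightly better lower bound on $d_L\bigl([\R],[\Delta_1]\bigr)$ than the paper's $\ln\frac{4}{3}$. One cosmetic caveat: Theorem~\ref{theorem: geodesicthroughR} as quoted does not literally assert that the two endpoints lie at \emph{equal} distance $\delta$ from $\R$ with total length $2\delta$; as in the paper, you should pass to a symmetric sub-arc of the shortest curve to obtain $X,Y$ with $d_{GH}(X,\R)=d_{GH}(Y,\R)=\delta$ and $d_{GH}(X,Y)=2\delta$ before scaling. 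This does not affect the argument.
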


\begin{proof}
Suppose that $d_L\bigl([\Delta_1],\,[\mathbb{R}]\bigr) = 0$. By Lemma~\ref{LipDefinition}, it is equivalent to the following condition: for arbitrary~$\varepsilon > 0$, there exists a bijection~$f\: [\Delta_1]\to [\R]$ such that for all~$x,\,x'\in [\Delta_1]$ the inequalities hold 
\begin{align}\label{LipIneq2}
(1-\varepsilon)d_{GH}(x, x')\le d_{GH}\bigl(f(x),f(x')\bigr)\le (1+\varepsilon)d_{GH}(x, x').
\end{align}

Choose an arbitrary~$\varepsilon > 0$ and a bijection~$f\:[\Delta_1]\to [\R]$ satisfying~(\ref{LipIneq2}).

\textbf{Case~1.} Suppose that $f(\Delta_1) = Y \neq\mathbb{R}$. In other words, $d_{GH}\bigl(Y, \R\bigr) > 0$. 

Since $d_{GH}\bigl(Y, \R\bigr) > 0$, for abitrary $\lambda \neq 1, \lambda > 0$, we have $d_{GH}\bigl(\lambda Y, \R\bigr) = \lambda d_{GH}\bigl(Y, \R\bigr) \neq d_{GH}\bigl(Y, \R\bigr)$ and, hence, $\lambda Y\neq Y$. We put $X = f^{-1}(2Y)$. From~\ref{LipIneq2} and the inequality $d_{GH}\bigl(Y, \R\bigr) > 0$ it follows that $d_{GH}(X, \Delta_1) > 0$. We put $2Y_2 = f(\frac{3}{2}X),\,2Y_1 = f(\frac{1}{2}X)$. 

Let $d_{GH}(X, \Delta_1) = 2\rho$. Then from Theorem~\ref{thm: boundedcloud} we conclude that $d_{GH}(X,\frac{3}{2}X) = d_{GH}(X, \frac{1}{2}X) = \rho$. 

Inequalities~(\ref{LipIneq2}) imply that  $d_{GH}(2Y,\,2Y_i)\le (1+\varepsilon)\rho,\,i=1,\,2$, and $d_{GH}(2Y_1,\,2Y_2)\ge (1-\varepsilon)2\rho$. 

Dividing these by~$2$, we obtain $d_{GH}(Y,Y_i) \le \frac{1+\varepsilon}{2}\rho,\,i= 1,\,2$, and $d_{GH}(Y_1,Y_2)\ge (1-\varepsilon)\rho$.

We put $X_i = f^{-1}(Y_i),\,i = 1,\,2$. By once again applying Inequalities~(\ref{LipIneq2}), we obtain that $d_{GH}(\Delta_1, X_i)\le \frac{(1+\varepsilon)^2}{2}\rho,\,i=1,\,2$, and $d_{GH}(X_1,X_2) \ge (1-\varepsilon)^2\rho$. By Theorem~\ref{thm: boundedcloud}, $$(1-\varepsilon)^2\rho\le d_{GH}(X_1, X_2)\le \frac{1}{2}\max\Bigl\{d_{GH}(X_1, \Delta_1), d_{GH}(X_2, \Delta_1)\Bigr\}\le \frac{(1+\varepsilon)^2\rho}{4}. $$ Therefore, $$3\varepsilon^2-10\varepsilon + 3\le 0,$$
and, hence, $$\varepsilon \ge \frac{5 - \sqrt{5^2-3\cdot 3}}{3} = \frac{1}{3} > 0.$$

\textbf{Case~2.} Suppose that~$f(\Delta_1) = \mathbb{R}$. According to Theorem~\ref{theorem: geodesicthroughR}, we can choose metric spaces $X$ and~$Y$ such that $r := d_{GH}(X, \R) = d_{GH}(Y, \R) > 0$ and $d_{GH}(X, Y) = 2r$.

We put $X_1 = f^{-1}(\mathbb{Z}),\,X_2 = f^{-1}(\tilde{\mathbb{R}})$. By Inequalities~\ref{LipIneq2}, 
$d_{GH}(X_i,\Delta_1)\le (1+\varepsilon)r,\,i = 1,\,2$, and $d_{GH}(X_1,X_2)\ge (1-\varepsilon)\cdot 2r$. By Theorem~\ref{thm: boundedcloud}, $$(1-\varepsilon)\cdot 2r \le d_{GH}(X_1, X_2)\le\frac{1}{2}\max\Bigl\{d_{GH}(X_1, \Delta_1), d_{GH}(X_2, \Delta_1)\Bigr\}\leqslant \frac{1}{2}(1+\varepsilon)r.$$
Therefore, 
$$ 4 - 4\varepsilon \le 1 + \varepsilon \Longleftrightarrow \varepsilon \ge \frac{3}{5}.$$

Summing up, in both cases we have shown that  $\varepsilon\ge\frac{1}{3}$ which implies that the Lipschitz distance between $[\Delta_1]$ and~$[\mathbb{R}]$ cannot equal zero.
\end{proof}

Note that in fact we obtain a stronger inequality. Namely, for an arbitrary bijection $f\colon X\to Y$, if $\ln\max\bigl\{\dil f,\,\dil f^{-1}\bigr\}\le \delta$, according to the proof of Lemma~\ref{LipDefinition}, the following inequality holds: $\min\bigl\{e^\delta-1, 1-e^{-\delta}\bigr\} \geqslant \min\bigl\{\frac{1}{3}, \frac{3}{5}\bigr\}$. It follows that $\delta \ge \min\bigl\{\ln\frac{4}{3}, \ln\frac{3}{2}\bigr\} = \ln\frac{4}{3}$. Thus, 

\begin{corollary}
The inequality holds $d_L\bigl([\Delta_1],\,[\mathbb{R}]\bigr)\ge \ln\frac{4}{3}$.
\end{corollary}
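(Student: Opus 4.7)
The plan is to promote the proof of Theorem~\ref{theorem: nonzeroLip} to a quantitative statement. Recall that theorem already shows: any bijection $f\colon[\Delta_1]\to[\R]$ satisfying the inequalities of Lemma~\ref{LipDefinition} with some constant~$\varepsilon$ must have $\varepsilon\ge\frac{1}{3}$ (Case~1) or $\varepsilon\ge\frac{3}{5}$ (Case~2); in either case $\varepsilon\ge\frac{1}{3}$. The task thus reduces to converting this lower bound on~$\varepsilon$ into a lower bound on the quantity $\delta:=\ln\max\{\dil f,\dil f^{-1}\}$, and then taking the infimum over~$f$.

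Suppose for contradiction that $d_L\bigl([\Delta_1],[\R]\bigr)<\ln\frac{4}{3}$, so that for some $\delta<\ln\frac{4}{3}$ there exists a bijection~$f$ with $\max\{\dil f,\dil f^{-1}\}\le e^\delta$. For all $x,x'\in[\Delta_1]$ this immediately yields $e^{-\delta}d_{GH}(x,x')\le d_{GH}\bigl(f(x),f(x')\bigr)\le e^\delta d_{GH}(x,x')$. I claim the inequalities of Lemma~\ref{LipDefinition} then hold with $\varepsilon:=e^\delta-1$: indeed $1+\varepsilon=e^\delta$ by definition, while $1-\varepsilon=2-e^\delta\le e^{-\delta}$, the latter being equivalent to $e^\delta+e^{-\delta}\ge 2$. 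Crucially, we must use the \emph{larger} of the two natural choices $e^\delta-1$ and $1-e^{-\delta}$, since the proof of Theorem~\ref{theorem: nonzeroLip} invokes both the upper and lower Lipschitz bounds of~$f$ simultaneously.

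Now the case analysis of Theorem~\ref{theorem: nonzeroLip}, applied to this~$f$ with this~$\varepsilon$, gives $\varepsilon\ge\frac{1}{3}$; equivalently $e^\delta\ge\frac{4}{3}$, i.e.\ $\delta\ge\ln\frac{4}{3}$, contradicting our choice of~$\delta$. Hence $d_L\bigl([\Delta_1],[\R]\bigr)\ge\ln\frac{4}{3}$. The only delicate point in this plan is the $\varepsilon$-to-$\delta$ bookkeeping highlighted above: the argument is essentially routine once one commits to $\varepsilon=e^\delta-1$ (rather than the smaller $1-e^{-\delta}$), and no new geometric input beyond what already enters Theorem~\ref{theorem: nonzeroLip} is needed.
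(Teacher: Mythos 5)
Your proposal is correct and follows essentially the same route as the paper: convert the dilatation bound $e^\delta$ into the $\varepsilon$-inequalities of Lemma~\ref{LipDefinition} via $\varepsilon=e^\delta-1$ (the larger of $e^\delta-1$ and $1-e^{-\delta}$, exactly as required since the proof of Theorem~\ref{theorem: nonzeroLip} uses both directions), then feed $\varepsilon\ge\frac{1}{3}$ back to get $\delta\ge\ln\frac{4}{3}$. If anything, your bookkeeping is slightly more carefully stated than the paper's remark, which phrases the same conclusion in terms of $\min\bigl\{e^\delta-1,\,1-e^{-\delta}\bigr\}$.
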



\end{document}